\newcommand{\ve}{\varepsilon}
\newcommand{\nni}{\noindent}
\newcommand{\be}{\begin{equation}}
\newcommand{\ee}{\end{equation}}
\newcommand{\ba}{\begin{align}}
\newcommand{\ea}{\end{align}}
\newtheorem{theorem}{Theorem}[section]
\newtheorem{lemma}{Lemma}[section]
\def\om{\omega}
\def\beq{\begin{equation}}\def\enq{\end{equation}}
\title[Integer group determinants ]{The integer group determinants for the abelian groups of order 18}
\author[B. Paudel]{Bishnu Paudel}
\address{Mathematics and Statistics Department\\
         University of Minnesota Duluth\\
      Duluth, MN 55812, USA}
\email{bpaudel@d.umn.edu}
\author[C. Pinner]{Chris Pinner}
\address{ Department of Mathematics\\
         Kansas State University\\
         Manhattan, KS 66506, USA}
\email{pinner@math.ksu.edu}
\keywords{Integer group determinants, small groups}
\subjclass[2010]{Primary: 11C20, 15B36; Secondary: 11C08, 43A40}
\date{\today}
\begin{document}

\begin{abstract}
We obtain a complete description of the integer group determinants for $\mathbb Z_{18}$  (these are the $18\times18$ circulant determinants with integer entries) and $\mathbb Z_3 \times \mathbb Z_6$, the two abelian groups of order 18. This completes the groups of order less than 20.

\end{abstract}

\maketitle

\section{Introduction} 
At the 1977 meeting of the AMS in Hayward, California, Olga Taussky-Todd  \cite{TausskyTodd} asked whether one could determine the set of integer values taken by a group  determinant when the entries are all integers, with particular interest in the case of cyclic groups $\mathbb Z_n$ (where the group determinants are the $n\times n$ circulant determinants). As shown by Newman \cite{Newman1} and Laquer \cite{Laquer}, this is straightforward when $G=\mathbb Z_p$ or $\mathbb Z_{2p}$,  $p$ a prime.
Similar results hold for $D_{2p}$ and $D_{4p}$, the Dihedral groups of order $2p$ and $4p$
(see \cite{Mahoney} and \cite{dihedral}). In general though, the problem quickly becomes complicated (even in the case of $\mathbb Z_{p^2}$ once $p\geq 5$, see \cite{Newman2}). A complete description was obtained for groups $G$ with $|G|\leq 14$ 
in \cite{smallgps}, extended to 15 in \cite{bishnu1}. A series of papers \cite{Humb,Humb2,dihedral,ZnxH,Q16,Yamaguchi1,Yamaguchi2,Yamaguchi3,Yamaguchi4,
Yamaguchi5,Yamaguchi6,Yamaguchi9}  dealt with the 14 groups
of order 16 (see the last paper for a nice overview). In
\cite{nonabelian}  we considered the non-abelian groups of order 18. Here we give the set of integer group determinants for $\mathbb Z_{18}$ and $\mathbb Z_3 \times \mathbb Z_6$, the abelian groups of order 18. This resolves the Taussky-Todd integer group determinant
problem for all $|G|<20$ (3 groups of order 20 remain unresolved).

We shall think of the integer group determinant as being defined on elements of the Group Ring $\mathbb Z [G]$:
$$ D\left(\sum_{g\in G} a_g \: g \right) = \det\left( a_{gh^{-1}} \right), $$
where $g$ indexes rows and $h$ columns. We write $S(G)$ for the set of integer group determinants for $G$. 
We recall that $S(G)$ is closed under multiplication; for $\alpha,\beta\in \mathbb Z[G]$ we have $D(\alpha\beta)=D(\alpha)D(\beta)$. We also note Newman and Laquer's cyclic 
results
\be \label{Newman} \gcd(m,n)=1\Rightarrow m\in S(\mathbb Z_n),\quad n^2\mathbb Z\subset S(\mathbb Z_n), \ee
and the divisibility restrictions on an integer group determinant $D$ in $S(\mathbb Z_n)$:
\be \label{cyclicdiv} p^{\alpha}\parallel n, \; p\mid D \; \Rightarrow \; p^{\alpha+1} \mid D. \ee
Conditions \eqref{cyclicdiv} are sufficient as well as necessary for $G=\mathbb Z_p$, $\mathbb Z_{2p}$ and $\mathbb Z_9$.

\section{\Large The Integer Group Determinants for $\mathbb Z_3 \times \mathbb Z_6$} 

\vskip0.1in
When calculating the group determinant, we shall think of $\mathbb Z_3\times \mathbb Z_6$ as 
$$G=\mathbb Z_2 \times \mathbb Z_3 \times \mathbb Z_3=\langle X,Y,Z \; :\; X^2=Y^3=Z^3=1\rangle. $$ 
As observed by Dedekind, in the case of an abelian group,  the group determinant
factors nicely  into linear factors; for example, see \cite{Conrad} or \cite{book}.
For an element
 $$F(X,Y,Z)=\sum_{i=0}^1\sum_{j=0}^2\sum_{k=0}^2 a_{ijk}X^iY^jZ^k$$
in $\mathbb Z[G]$,
we have the corresponding group determinant
$$D=\prod_{x^2=1}\prod_{y^3=1} \prod_{z^3=1} F(x,y,z),$$ 
which takes the form
$D=D_1D_2,$ where $D_1$ and $D_2$ are the $\mathbb Z_3 \times \mathbb Z_3$ 
determinants of $F(1,y,z)$ and $F(-1,y,z)$:
$$ D_1=\prod_{y^3=1} \prod_{z^3=1} F(1,y,z),\quad  D_2=\prod_{y^3=1} \prod_{z^3=1} F(-1,y,z). $$

 From
$$ S(\mathbb Z_3 \times \mathbb Z_3)=\{ 3^6m \text{ or } 9m\pm 1 \; : \; m\in \mathbb Z\}, $$
and $F(1,y,z)\equiv F(-1,y,z)$ mod 2, we get the restrictions
\be \label{restrictions}  D_1\equiv D_2 \bmod 2,\quad 3^6\mid D_i \text{ or } D_i\equiv \pm 1 \bmod 9. \ee

For the $3^\ell\parallel D$ with $\ell \neq 7,$ we get all $D=D_1D_2$ satisfying \eqref{restrictions}.

\begin{theorem}\label{not7}
For $G=\mathbb Z_3\times \mathbb Z_6$, the integer group determinants $D$ with $3^{\ell}\parallel D$, $\ell \neq 7,$ take the following forms.

The odd values coprime to 3 are the $\pm (1+18m)$, $m\in \mathbb Z$. 

The even values coprime to 3 are the $\pm 4(4+9m)(4+9k)$, $m,k\in \mathbb Z$.

The odd multiples of 3 are the $\pm 3^6(1+6m)$ or $3^8(1+2m)$, $m\in \mathbb Z$.

The even multiples of 3 are the $\pm 2^2\cdot 3^6 (1+3m)(4+9k)$ or $2^2\cdot 3^8m(4+9k)$, $m,k\in \mathbb Z$ or $2^2\cdot 3^{12}m$, $m\in \mathbb Z$.

\end{theorem}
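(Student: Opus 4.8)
The plan is to exploit the factorization $D=D_1D_2$ where $D_i$ are $\mathbb{Z}_3\times\mathbb{Z}_3$ determinants of $F(1,y,z)$ and $F(-1,y,z)$ respectively, with $F(1,y,z)\equiv F(-1,y,z)\bmod 2$. The necessity direction follows immediately from \eqref{restrictions} together with the known structure of $S(\mathbb{Z}_3\times\mathbb{Z}_3)$: any admissible $D$ must be a product $D_1D_2$ with $D_1\equiv D_2\bmod 2$ and each $D_i$ either $\pm1\bmod 9$ or divisible by $3^6$; a short case analysis on the parities and the $3$-adic valuations of $D_1,D_2$ (treating the constraint $\ell\neq 7$, which rules out precisely the combination where one factor has $3$-valuation $6$ and the other has valuation $1$ — wait, $6+1=7$, so indeed $\ell=7$ is excluded) then yields that $D$ must lie in one of the listed families. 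The bulk of the work is the sufficiency direction: I must realize every value in each family as an actual product $D_1D_2$.

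For sufficiency I would build a toolkit of explicit elements of $S(\mathbb{Z}_3\times\mathbb{Z}_3)$ and of $S(\mathbb{Z}_3\times\mathbb{Z}_6)$, then use multiplicativity of $S(G)$. First, from $S(\mathbb{Z}_3\times\mathbb{Z}_3)=\{3^6m\text{ or }9m\pm1\}$ I get: every $1+9m$, every $-1+9m$, every $3^6m$ as a single $\mathbb{Z}_3\times\mathbb{Z}_3$ determinant. The odd coprime-to-3 values $\pm(1+18m)$: write $1+18m$ as $D_1D_2$ with both $D_i$ odd and $\equiv\pm1\bmod9$; since $\{9k\pm1\}$ is closed under multiplication and contains all odd residues coprime to 3 in suitable combinations, one checks $1+18m$ always factors appropriately (or is achieved directly by a clever element). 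The even coprime-to-3 values $\pm4(4+9m)(4+9k)$: here each factor $F(1,y,z)$ and $F(-1,y,z)$ should be chosen so that $D_1=\pm2(4+9m)$-type — actually since $2\mid D_i$ forces $D_i\equiv\pm1\bmod9$ is impossible, so $4\mid D_i$ is forced and $D_i/\pm$ must be $\equiv4,\dots\bmod9$... I would pin down that each even $D_i$ coprime to 3 has the form $\pm(4+9m)\cdot$(even), consistent with $S(\mathbb{Z}_3\times\mathbb{Z}_3)$, matching the stated family.

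For the multiples of 3: the odd ones $\pm3^6(1+6m)$ come from $D_1=3^6(\text{unit part})$, $D_2=\pm1\bmod9$ odd, or from $D_1D_2$ both divisible by $3^3$ — but $3^3\notin$ valuations available for $\mathbb{Z}_3\times\mathbb{Z}_3$ (valuations are $0$ or $\geq6$), so one factor carries all the $3$'s; the refinement $1+6m$ vs $3^8(1+2m)$ reflects whether the cofactor is $\pm1\bmod9$ (giving mod-6 information after combining the two $\pm1\bmod 3$ constraints) or picks up an extra $3^2$. The even multiples split similarly, with the $2^2\cdot3^{12}m$ family arising when both $D_i$ are divisible by $3^6$. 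Throughout, the engine is: (a) known single-determinant values in $S(\mathbb{Z}_3\times\mathbb{Z}_3)$, (b) a handful of explicitly exhibited $F$'s on $\mathbb{Z}_3\times\mathbb{Z}_6$ producing small ``seed'' determinants (e.g. the value $4$, the value $2^2\cdot3^6$, etc.), and (c) multiplicativity to sweep out the arithmetic progressions.

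The main obstacle I anticipate is the sufficiency for the even values and for the $3^8$ versus $3^6$ dichotomy among odd multiples of 3: one must produce, with bare hands, elements $F(X,Y,Z)\in\mathbb{Z}[\mathbb{Z}_3\times\mathbb{Z}_6]$ whose two $\mathbb{Z}_3\times\mathbb{Z}_3$ sub-determinants have prescribed $2$-adic and $3$-adic behavior simultaneously, and verifying these requires either an explicit evaluation of a $3\times3$-block product of linear factors over the cube roots of unity or an appeal to a previously catalogued generating set for $S(\mathbb{Z}_3\times\mathbb{Z}_3)$ with enough flexibility in both primes. I would organize this as a sequence of lemmas: one establishing the ``menu'' of realizable pairs $(v_2(D_i),v_3(D_i),D_i\bmod 9)$, and then a clean combinatorial argument assembling the four families, deferring the case $\ell=7$ (which needs a separate, harder construction) to the companion theorem.
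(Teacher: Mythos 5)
Your overall framing matches the paper's: necessity falls out of the restrictions \eqref{restrictions} together with $S(\mathbb Z_3\times\mathbb Z_3)=\{3^6m \text{ or } 9m\pm1\}$, and the content of the theorem is sufficiency, i.e.\ realizing every admissible value. But your proposal stops exactly where the proof begins. The entire substance of the paper's argument is a list of explicit elements $F(x,y,z)\in\mathbb Z[G]$ --- for instance $1+(1+x)m\Phi(y)\Phi(z)$ for the $1+18m$, $1+(x+1)y+m(x+1)\Phi(y)\Phi(z)$ for the $3^6(6m+1)$, and separate constructions for $\ell=6$, $\ell\geq 8$, and $\ell=12,13,14$ --- each verified by evaluating at the cube roots of unity. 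You acknowledge this is ``the main obstacle'' and propose a lemma cataloguing ``realizable pairs,'' but you never produce a single such element, so no family is actually shown to be achieved. The catalogue of realizable pairs \emph{is} the theorem; deferring it is not a proof. Note also that the coupling is genuinely at the polynomial level: $D_1$ and $D_2$ are the $\mathbb Z_3\times\mathbb Z_3$ determinants of $F(1,y,z)$ and $F(-1,y,z)$, which agree coefficientwise mod $2$, so you cannot simply choose two independent elements of $\mathbb Z[\mathbb Z_3\times\mathbb Z_3]$ realizing a desired pair $(D_1,D_2)$; this is why a priori one only gets necessary conditions from \eqref{restrictions} and why explicit two-variable-in-$x$ constructions are needed.

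Two further points of confusion. First, your explanation of why $\ell=7$ is excluded (``one factor has $3$-valuation $6$ and the other has valuation $1$'') is wrong: a $\mathbb Z_3\times\mathbb Z_3$ determinant divisible by $3$ is divisible by $3^6$, so valuation $1$ never occurs; $3^7\parallel D$ arises from $3^7\parallel D_1$, $3\nmid D_2$, which \emph{is} permitted by \eqref{restrictions}. The point of excluding $\ell=7$ is that there the conditions \eqref{restrictions} are necessary but no longer sufficient (Theorem \ref{7odd} shows Type 1 primes are forced), so your claim that the case analysis on \eqref{restrictions} ``yields that $D$ must lie in one of the listed families'' needs the $\ell\ne 7$ hypothesis for sufficiency, not for necessity. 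Second, the assertion that ``$2\mid D_i$ forces $D_i\equiv\pm1\bmod 9$ is impossible, so $4\mid D_i$ is forced'' is garbled: even $D_i$ with $D_i\equiv\pm1\bmod 9$ certainly exist (e.g.\ $8$); the factor $4$ in $\pm4(4+9m)(4+9k)$ comes from both $D_1$ and $D_2$ being even, which is forced by $D_1\equiv D_2\bmod 2$.
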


This still  leaves the $D=3^7m$, $\gcd(m,3)=1$. In the case of $D$ odd, we can obtain all values which contain certain prime factors $p\equiv 1$ mod 3. We observe that all primes $p\equiv 1$ mod 3 
are norms of elements in $\mathbb Z [\om]$, $\om :=e^{2\pi i/3},$
$$N(a+b\om)=(a+b \om)(a+b\om^2)=a^2-ab+b^2, \quad a,b\in \mathbb Z. $$
Moreover, as a consequence of Lemma \ref{evenodd} below, we can assume that $6\mid ab$.

\begin{lemma} If $p\equiv 1$ mod 3, then
\be  \label{types} p=N(a+ 3b \om ),\quad a,b\in \mathbb Z, \;\; 2\mid ab. \ee
\end{lemma}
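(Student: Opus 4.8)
The plan is to start from the representation $p=N(c+d\om)=c^2-cd+d^2$ recorded above (valid because $p\equiv 1\bmod 3$), first adjust it so that the coefficient of $\om$ becomes a multiple of $3$, and then repair the parity by passing to a conjugate if necessary.

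\emph{Making the $\om$-coefficient divisible by $3$.} Reducing $c^2-cd+d^2\equiv 1\bmod 3$ and checking the residue classes of $(c,d)$ modulo $3$, one sees that $p\equiv 1\bmod 3$ forces at least one of $3\mid c$, $3\mid d$, $3\mid (c-d)$. In the respective cases $3\mid c$, $3\mid d$, $3\mid (c-d)$, replace $c+d\om$ by $\om^2(c+d\om)=(d-c)-c\om$, by $c+d\om$ itself, or by $\om(c+d\om)=-d+(c-d)\om$; each is an associate, hence still has norm $p$, and in each case the $\om$-coefficient is now $3b$ for some $b\in\mathbb Z$. Thus $p=N(a+3b\om)=a^2-3ab+9b^2$ for some $a,b\in\mathbb Z$. (Alternatively, $x^2-3xy+9y^2$ has discriminant $-27$ and is equivalent to the principal form $x^2+xy+7y^2$; since that discriminant has class number $1$, the form represents precisely the primes $p$ for which $-3$ is a square mod $p$, i.e.\ $p\equiv 1\bmod 3$. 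I would keep the elementary version to remain self-contained.)

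\emph{Fixing the parity.} As $p$ is odd, reducing $a^2-3ab+9b^2\equiv a^2+ab+b^2\bmod 2$ shows $a$ and $b$ are not both even, so if they are not both odd then $2\mid ab$ and we are done. If both are odd, replace $a+3b\om$ by its conjugate $a+3b\om^2=(a-3b)+3(-b)\om$: the norm is unchanged, the $\om$-coefficient remains divisible by $3$, and the new coefficients $a-3b$ (even) and $-b$ (odd) have even product, giving \eqref{types}.

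The only real obstacle is the bookkeeping in the first step — verifying that no residue class of $(c,d)$ with $c^2-cd+d^2\equiv 1\bmod 3$ escapes all three cases — together with the choice between that short case-check and quoting class number one for discriminant $-27$. The parity step is then a one-line computation using multiplicativity of the norm.
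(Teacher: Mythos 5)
Your proof is correct and follows essentially the same route as the paper: the paper's Lemma \ref{evenodd} performs exactly your unit adjustment (multiplying by $\om$ or $\om^2$ according to the residues of the coefficients mod $3$, with the same observation that $A\equiv -B\bmod 3$ would force $3\mid N(\alpha)$), and the parity repair by passing to the conjugate $a+3b\om^2=(a-3b)-3b\om$ is the paper's argument verbatim. The mod-$2$ check that $a,b$ cannot both be even is implicit in the paper's ``not both even'' clause, so nothing is missing.
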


We divide the $p\equiv 1$ mod 3 into two types. Type 1:
\begin{align*} & 7, 13, 19, 37, 61, 67, 73, 79, 97, 103, 139, 151, 163, 181, 193, 199, 211, 241, 271, 313, 331,\\
& 337,349, 367, 373, 379, 409, 421, 463, 487, 523, 541, 547, 571, 577, 607, 613, 619, 631, 661,\\ & 673, 709, 751, 757, 769, 787, 823, 829, 853, 859, 877, 883, 907, 937,\ldots 
\end{align*}
where $2\nmid b$ in \eqref{types}, that is, since $N(a+b\om)=N(-a-b\om)$,
\be \label{Type1} p=N( 2(1+3B) + 3(1+2A)\om),\quad A,B\in \mathbb Z, \ee
and Type 2 
\begin{align} \label{Type2} &  31, 43, 109, 127, 157, 223, 229, 277, 283, 307, 397, 433, 439, 457, 499, 601, 643, 691, 727, \\\nonumber 
& 733, 739, 811, 919, 997, 1021, 1051, 1069, 1093, 1327, 1399, 1423, 1459, 1471, 1579, 1597, \\ \nonumber & 1627, 1657, 1699, 1723, 1753, 1777, 1789, 1801, 1831, 1933, 1999, 2017,\ldots  
\end{align}
where $2\mid b$, that is
$$ p=N(1+6A + 6B\om ),\quad A,B\in \mathbb Z. $$
Type 1 is OEIS sequence A040034, the primes of the form $4x^2+2xy+7y^2$. Type 2 
 is OEIS sequence A014752, the primes  of the form $x^2+27y^2$. This can be seen by writing $N(2A+3B\om)=4(A-B)^2+2(A-B)B+7B^2$ and 
$N(A+6B\om)=(A-3B)^2+27B^2$. Type 1 can also be characterized as the primes where $2$ is not a cube mod $p$ and Type 2 as  the primes $p\equiv 1\bmod 3$ where $2$ is a cube mod $p$.
Note, a $p\equiv 1\bmod 3$ cannot be both a Type 1 and a Type 2 (see Lemma \ref{properties} below).

In the case of $3^7 \parallel D$, $D$  odd, we show that $D$ must contain a Type 1 prime and that we can obtain all values which contain a Type 1 prime. For the $3^7\parallel D$,  $D$ even, we achieve all $D$ with $2^6\mid D,$ 
but only certain values with $2^{\ell}\parallel D$,  $2\leq {\ell} \leq 5.$

\begin{theorem} \label{7odd} For $G=\mathbb Z_3\times \mathbb Z_6$, the odd integer group determinants $D$  with $3^7\parallel D$ are the integers of the form $3^7(2m+1)p$, $p$ a Type 1 prime.

For the even integer group determinants $D$ with $3^7\parallel D$, we achieve all values of the form $3^7\cdot 2^6 m$, $m$ in $\mathbb Z,$ and the
\be \label{niceeven} 3^7\cdot 2^4  (2m+1)(7+18k), \quad k,m\in \mathbb Z. \ee
The remaining even $D$ with $3^7\parallel D$ contain a Type 1 prime $p$. These are the
\be \label{typea}3^7\cdot 2^2 m(4+9k)p,\quad k,m \in \mathbb Z, \ee
where $p$ is any Type 1 prime,  the
\be \label{typeb} 3^7\cdot 2^2 m(2+9k)p,\quad m,k\in \mathbb Z, \ee
where $p$ is a Type 1 prime with $p\equiv 7\bmod 18$, and the
\be \label{typec}  3^7\cdot 2^2 mp,\quad m\in \mathbb Z \ee
where $p$ is a Type 1 prime with $p\equiv 13\bmod 18.$
\end{theorem}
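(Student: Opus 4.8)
The plan is to work with the factorization $D=D_1D_2$ where $D_1,D_2$ are the $\mathbb Z_3\times\mathbb Z_3$ determinants of $F(1,y,z)$ and $F(-1,y,z)$, reducing everything to the structure of $S(\mathbb Z_3\times\mathbb Z_3)=\{3^6m \text{ or } 9m\pm1\}$ combined with the mod-2 coupling $D_1\equiv D_2\bmod 2$. Since $3^7\parallel D$ and $D_i$ is either divisible by $3^6$ or coprime to $3$, the only way to get exactly $3^7$ is to have one factor, say $D_1$, satisfy $3^6\parallel D_1$ and the other $3^1\parallel D_2$ — but $D_2$ coprime to $3$ is forced to be $\equiv\pm1\bmod 9$, so in fact $3^6\mid D_1$ with the residual factor of $3$ coming from inside the $\mathbb Z_3\times\mathbb Z_3$ determinant structure. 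I would first pin down exactly which residues mod higher powers of $3$ are forced: an element of $S(\mathbb Z_3\times\mathbb Z_3)$ with $3^7\parallel$ it must have the form $3^6\cdot(\text{unit-ish})\cdot 3$, and I expect a lemma (either cited or quickly proved here) saying that $3^7\parallel D_1$ for a $\mathbb Z_3\times\mathbb Z_3$ determinant forces $D_1/3^6\equiv \pm2$ or similar constraints, while the companion piece contributes the Type 1 prime. This is where the norm form $N(a+3b\omega)=a^2-3ab+3b^2$ enters: the $\mathbb Z_3\times\mathbb Z_3$ determinant naturally factors over $\mathbb Z[\omega]$, and tracking the $3$-adic and the rational-prime content of those norm factors is what produces the Type 1 vs Type 2 dichotomy.

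For the \emph{odd} case the claim is a clean iff: every odd $D$ with $3^7\parallel D$ has the form $3^7(2m+1)p$ with $p$ Type 1. The necessity direction I would get by the mod-2 and mod-9 restrictions plus the observation (to be proved via the explicit factorization of $F(\pm1,y,z)$ over $\mathbb Z[\omega]$) that the "extra" factor of $3$ forces a norm $N(\alpha)$ with $\alpha$ not divisible by $\sqrt{-3}$ to an even power in a way that drags in a prime $p\equiv1\bmod3$ for which $2$ is not a cube, i.e. Type 1. For sufficiency I would exhibit explicit group ring elements: start from a known construction realizing $3^7$ (or $3^7\cdot p$) as a $\mathbb Z_3\times\mathbb Z_3$-style determinant, tensor appropriately with the $\mathbb Z_2$ part to control parity, and multiply by the standard building block realizing any coprime-to-$6$ odd residue $2m+1$ using \eqref{Newman}-type constructions. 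Because $S(G)$ is closed under multiplication, it suffices to realize $3^7p$ for each Type 1 prime $p$ and separately realize arbitrary odd $2m+1$; the product then covers everything. Realizing $3^7p$ concretely is the crux: I would use the representation \eqref{Type1}, $p=N(2(1+3B)+3(1+2A)\omega)$, to write down an $F(-1,y,z)$ whose $\mathbb Z_3\times\mathbb Z_3$ determinant is $\pm3\cdot p$ and an $F(1,y,z)$ with determinant $3^6\cdot(\text{odd unit})$, checking the mod-2 compatibility.

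For the \emph{even} case the statement is a patchwork, so I would organize by $2\parallel$-valuation of $D$. When $2^6\mid D$ (equivalently both $D_1,D_2$ even with enough room), the $\mathbb Z_2$-factor contributes a full square $2^6$ via $D_2$ being a multiple of (something)$^2$ times the $\mathbb Z_3\times\mathbb Z_3$ freedom, and I would show all $3^7\cdot 2^6 m$ occur by a direct construction plus multiplicativity. The family \eqref{niceeven} $3^7\cdot 2^4(2m+1)(7+18k)$ I would get by a tailored element where one $\mathbb Z_3\times\mathbb Z_3$ factor is $\equiv 7+18k$ (note $7\equiv7\bmod{18}$ is itself Type 1) and the $2$-content is exactly $2^4$; here the constraint "$7+18k$" rather than a general odd number reflects that a $\mathbb Z_3\times\mathbb Z_3$ determinant with $2^2\parallel$ it must be $\equiv 4\bmod 9$ (from $S(\mathbb Z_3\times\mathbb Z_3)$) while also carrying the residual $3$, forcing the Type-1-with-$p\equiv7\bmod{18}$ shape. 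The remaining even cases \eqref{typea}–\eqref{typec} I would handle by showing: (i) any such $D$ must contain a Type 1 prime — this follows by pushing the odd-case necessity argument through after dividing out the $2$-part, since the $3^7$ together with the residue restrictions mod $9$ still forces a non-cube-$2$ prime; and (ii) the precise congruence of that prime mod $18$ is dictated by the $2$-adic valuation, via the mod-2 coupling $D_1\equiv D_2\bmod2$ interacting with $2^2\parallel(4+9k)$, $2^2\parallel(2+9k)$, or the $2$-free case. The main obstacle I anticipate is exactly this last interplay: showing that $2^\ell\parallel D$ for $\ell\in\{2,3,4,5\}$ forces \emph{both} the presence of a Type 1 prime \emph{and} its residue class mod $18$, which requires a careful case analysis of how $\{D_1,D_2\}$ split the $2$-part given that each $D_i\in S(\mathbb Z_3\times\mathbb Z_3)$ — i.e., each $D_i$ is either $\equiv\pm1\bmod9$ or $\equiv0\bmod{3^6}$, and one must be $\equiv$ the residual $3$ times a unit — and then tracking which residues mod $18$ are simultaneously achievable as norms $N(a+3b\omega)$. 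The sufficiency constructions for \eqref{typea}–\eqref{typec}, by contrast, should follow the odd-case template: realize $3^7\cdot 2^2 p$ (or with the extra $(4+9k)$ or $(2+9k)$ factor) explicitly using \eqref{Type1}, then invoke multiplicativity to append the free parameter $m$.
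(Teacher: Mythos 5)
Your outline gets the skeleton right (split $D=D_1D_2$ over the $\mathbb Z_2$ characters, exploit $S(\mathbb Z_3\times\mathbb Z_3)$ and the coupling $D_1\equiv D_2\bmod 2$, explicit constructions for sufficiency, a $\mathbb Z[\om]$ analysis for necessity), but two essential steps do not survive scrutiny. First, your sufficiency plan for the odd case --- realize $3^7p$ and ``separately realize arbitrary odd $2m+1$; the product then covers everything'' --- fails because arbitrary odd integers are not in $S(\mathbb Z_3\times\mathbb Z_6)$: by Theorem \ref{not7} the odd determinants coprime to $3$ are exactly the $\pm(1+18m)$, since each $D_i$ coprime to $3$ must be $\equiv\pm1\bmod 9$. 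Newman's result \eqref{Newman} is for cyclic groups and does not apply here, so multiplicativity only yields $3^7pu$ with $u\equiv\pm1\bmod{18}$ and misses, e.g., $3^7\cdot 5\,p$. The full odd freedom has to be built into a single group ring element: in the paper's construction \eqref{oddtype} the term $m\Phi(y)\Phi(z)$ perturbs only $F(1,1,1)=3^2(1+2m)$, while the complex characters contribute $3^5p$ and $F(-1,y,z)=z$ gives $D_2=1$. The same objection applies to your plan of appending the free parameter $m$ to \eqref{typea}--\eqref{typec} by multiplicativity alone.

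Second, and more seriously, the necessity claim (an odd $D$ with $3^7\parallel D$ must contain a Type 1 prime) is the heart of the theorem, and your proposal offers only a restatement of it (``drags in a prime $p\equiv1\bmod 3$ for which $2$ is not a cube''). The mechanism the paper uses is: if no Type 1 prime divides $D$, then each value $F(\pm1,\om^i,\om^j)$ has norm built only from Type 2 primes, and by Lemmas \ref{evenodd} and \ref{properties} any $\alpha\in\mathbb Z[\om]$ with $\gcd(N(\alpha),6)=1$ and no Type 1 prime dividing $N(\alpha)$ satisfies $\alpha\equiv\pm\om^t\bmod 6$; writing $F=f+(1-x)g$, expanding in powers of $(y-1)$ and $(z-1)$, and comparing the character values modulo $(1-\om)^3$ and modulo $6$ then forces incompatible constraints on the exponents $t_i,s_i$ (the $3$-divisibility forces certain equalities, the $2$-divisibility forbids them). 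Nothing in your sketch identifies this ``$\alpha\equiv\pm\om^t\bmod 6$'' characterization or any substitute for it, and without it the Type 1/Type 2 dichotomy never actually enters the argument. Two smaller points: $N(a+3b\om)=a^2-3ab+9b^2$, not $a^2-3ab+3b^2$; and the factor $7+18k$ in \eqref{niceeven} is an arbitrary integer $\equiv 7\bmod{18}$ arising from $D_2=2^2t$ with $t\equiv\pm2\bmod 9$ odd (a consequence of $2$ staying prime in $\mathbb Z[\om]$, which forces the complex terms to contribute exactly $2^4$), not a Type 1 prime as you suggest.
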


We give the proofs of these results in Section \ref{productproofs}

\section{ \Large The integer group determinants for $\mathbb Z_{18}$ }

We shall treat this group as 
$$G=\mathbb Z_2 \times \mathbb Z_9=\langle X,Y\; : \; X^2=Y^9=1\rangle, $$
and for an
$F(X,Y)$ in $\mathbb Z[G],$ write its group determinant as a product of two $\mathbb Z_9$
group determinants
$$ D=D_1D_2,\quad D_1=\prod_{y^9=1}F(1,y),\quad  D_2=\prod_{y^9=1}F(-1,y).$$
Here we have the basic restrictions:
\be \label{CyclicRestricts} D_1\equiv D_2 \bmod 2,\quad\quad 3\mid D_i \Rightarrow 3^3\mid D_i. \ee

For the $3^{\ell}\parallel D$ with $\ell \neq 3,$ we get all values satisfying these restrictions:

\begin{theorem} \label{easycyclic} For $G=\mathbb Z_{18},$ the integer group determinants $D$ with $3^{\ell}\parallel D$, $\ell \neq 3$, are the $m$ with $\gcd(m,6)=1$, the $2^2m$ with $\gcd(m,3)=1$, the $3^4(2m+1),$ and  the $2^2\cdot 3^4m$ with any $m\in \mathbb Z$.

\end{theorem}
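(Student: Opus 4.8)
The plan is to push everything down to $\mathbb Z_9$ via the factorization $D=D_1D_2$. Writing $F=F_0(Y)+XF_1(Y)$ with $F_0,F_1\in\mathbb Z[\mathbb Z_9]$, one has $D_1=D_{\mathbb Z_9}(F_0+F_1)$ and $D_2=D_{\mathbb Z_9}(F_0-F_1)$, and conversely any pair $\alpha,\beta\in\mathbb Z[\mathbb Z_9]$ arises this way exactly when $\alpha\equiv\beta\bmod 2$, since then $F_0=\tfrac12(\alpha+\beta)$ and $F_1=\tfrac12(\alpha-\beta)$ are integral. Hence
$$ S(\mathbb Z_{18})=\{\,D_{\mathbb Z_9}(\alpha)\,D_{\mathbb Z_9}(\beta)\ :\ \alpha\equiv\beta\bmod 2\,\}, $$
and I would use the exact description $S(\mathbb Z_9)=\{m:3\nmid m\}\cup 27\mathbb Z$ (equality in \eqref{CyclicRestricts} for $\mathbb Z_9$) throughout.

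\emph{Necessity.} If $2\mid D$ then $D_1,D_2$ are both even (they agree mod $2$), so $4\mid D$; and if $3\mid D$ then $27\mid D_i$ for some $i$, so $\ell\ge3$, forcing $\ell=0$ or $\ell\ge4$ once $\ell\ne3$. A short case check now matches $D$ with one of the four shapes: $\ell=0$ gives $\gcd(D,6)=1$ ($D$ odd) or $D=2^2m$ with $3\nmid m$ ($D$ even); $\ell\ge4$ gives $D=3^4(2m+1)$ ($D$ odd) or, since $\gcd(4,81)=1$, $D=2^2\cdot3^4m$ ($D$ even).

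\emph{Sufficiency, the easy families.} Three of the four are essentially immediate from \eqref{Newman}: the odd values coprime to $3$ are the $m$ with $\gcd(m,18)=1$, and the stated even multiples of $3$ (the $2^2\cdot3^4 m$) are precisely the multiples of $18^2$; both families lie in $S(\mathbb Z_{18})$ by \eqref{Newman}. For the even values coprime to $3$: $S(\mathbb Z_{18})$ is closed under multiplication, contains $-1$ and all $u$ with $\gcd(u,6)=1$ by \eqref{Newman}, and contains $4$ (take $F=1+Y$, so $D_1=D_2=2$); moreover $8\in S(\mathbb Z_{18})$ since $1+Y^2\equiv 1+2Y+Y^2\bmod 2$ with $D_{\mathbb Z_9}(1+Y^2)=2$ and $D_{\mathbb Z_9}(1+2Y+Y^2)=D_{\mathbb Z_9}((1+Y)^2)=4$, i.e.\ $F=(1+Y+Y^2)-XY$ works. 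Consequently $2^a\in S(\mathbb Z_{18})$ for all $a\ge2$, so every $D=4t$ with $3\nmid t$, being $\pm2^a u$ with $a\ge2$ and $\gcd(u,6)=1$, lies in $S(\mathbb Z_{18})$.

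\emph{Sufficiency, the odd multiples of $3^4$, and the main obstacle.} Using multiplicativity against the values coprime to $6$, this reduces to $3^k\in S(\mathbb Z_{18})$ for every $k\ge4$; since $3^6=D_{\mathbb Z_9}(\alpha)^2$ with $D_{\mathbb Z_9}(\alpha)=27$, and $\{4,5,6,7\}$ additively generates all integers $\ge4$, it suffices to realize $3^4$, $3^5$ and $3^7$. Here the parity condition is the crux: for $D=3^k$ the valuation identity forces $\{v_3(D_1),v_3(D_2)\}=\{0,k\}$, so one of $D_1,D_2$ must be a unit of $\mathbb Z[\mathbb Z_9]$; equivalently one must find $\mu\in\mathbb Z[\mathbb Z_9]$ with $D_{\mathbb Z_9}(1+2\mu)=\pm3^k$, i.e.\ realize $\pm3^k$ by an element $\equiv1\bmod 2$. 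I would build these exploiting that $3$ is totally ramified in $\mathbb Q(\omega)$ and in $\mathbb Q(\zeta_9)$: the cube of a uniformizer above $3$ is congruent to $1$ modulo $2$ (for instance $(1-\omega)^3=-3-6\omega\equiv1\bmod 2$), so products of such cubes supply elements of $3$-power norm in the $1\bmod 2$ class, and a finite search over $(\mathbb Z[\zeta_9]/2\mathbb Z[\zeta_9])^{\times}\cong\mathbb F_{64}^{\times}$ should pin down the $\mu$ realizing each of $3^4,3^5,3^7$. I expect this matching of the two mod-$2$ reductions in the case $3\mid D$ to be the genuine difficulty; it is exactly the obstruction that makes the excluded case $3^3\parallel D$ harder and in need of separate treatment.
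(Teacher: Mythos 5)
Your setup and three of the four families are fine and essentially match the paper: the $D_1D_2$ factorization with the mod-$2$ compatibility, Newman's result \eqref{Newman} for the $\gcd(m,18)=1$ values and the multiples of $18^2=2^2\cdot 3^4$, and the explicit elements giving $4$ and $8$ followed by multiplicativity for the $2^2m$, $\gcd(m,3)=1$ (your $F=(1+Y+Y^2)-XY$ is the paper's $1+(1+x)y+y^2$ up to the sign of $x$). The necessity direction is also correct. But the fourth family, the odd multiples of $3^4$, is exactly where you stop short: after reducing to realizing $3^4$, $3^5$ and $3^7$, you say a finite search over $(\mathbb Z[\zeta_9]/2)^{\times}$ ``should pin down'' the required elements. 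That is the one genuinely non-trivial construction in this theorem, and asserting that a search should succeed is not a proof — the whole content of the sufficiency claim here is that such elements exist. (A minor slip along the way: for $D=3^k$ with $k\ge 6$ the valuations need not split as $\{0,k\}$; e.g.\ $27\cdot 81$ is admissible for $k=7$. This doesn't hurt you, but the ``must be a unit'' claim is overstated.)

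For comparison, the paper dispatches this family with a single parametrized element,
$$ (1+x) + y^2+xy^3+y^4+xy^5+y^6+xy^7+y^8+m(1+x)\frac{y^9-1}{y-1}, $$
for which $F(-1,y)=y^2(1+y^7)/(1+y)$ gives $D_2=1$, while $F(1,y)=1-y+(2m+1)(y^9-1)/(y-1)$ gives $D_1=9(2m+1)\cdot 9=3^4(2m+1)$; this realizes every odd multiple of $3^4$ at once, with no need to build $3^4,3^5,3^7$ separately or to invoke multiplicativity against units. Your intended route (cubes of uniformizers above $3$ landing in the class of $1$ mod $2$) is plausible and could likely be made to work, but as written it is a plan rather than an argument, so the proof has a genuine gap at precisely the step you yourself identify as the crux.
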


This leaves the $D$ with  $3^3\parallel D$. In the odd case,  they must contain at least one Type 1 prime.

\begin{theorem}\label{cyclic3}
For $G=\mathbb Z_{18},$ the odd integer group determinants $D$ with $3^3\parallel D$ take the form $3^3mp,$ with $p$ a Type 1 prime and $\gcd(m,6)=1$.

The even $D$ with $3^3\parallel D$ take one of the following forms:
the $2^2\cdot 3^3mp$ for any Type 1 prime $p$ and $\gcd(m,3)=1$, 
the  $2^4\cdot 3^3 m$ with $\gcd(m,6)=1$, and all $2^6\cdot 3^3m$ with $\gcd(m,3)=1$.
\end{theorem}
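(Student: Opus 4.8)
The plan is to exploit the factorization $D=D_1D_2$ into two $\mathbb Z_9$ group determinants, where $D_1=\prod_{y^9=1}F(1,y)$ and $D_2=\prod_{y^9=1}F(-1,y)$ agree mod $2$, together with the known description of $S(\mathbb Z_9)$: by \eqref{Newman} and \eqref{cyclicdiv} (which are stated to be sufficient as well as necessary for $\mathbb Z_9$), a $\mathbb Z_9$ determinant is either coprime to $3$ and of the shape allowed by \eqref{cyclicdiv}, or is divisible by $3^3$. So if $3^3\parallel D$ then exactly one of $D_1,D_2$ is coprime to $3$ and the other is divisible by $3^3$; up to relabelling say $3\nmid D_1$ and $3^3\parallel D_2$ with $D_2=3^3 D_2'$, $\gcd(D_2',3)=1$. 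This reduces the problem to understanding which values coprime to $3$ arise as a $\mathbb Z_9$ determinant (for $D_1$), which values arise as $\frac{1}{27}$ times a $3^3$-exact $\mathbb Z_9$ determinant (for $D_2'$), and what parity constraint couples the two. First I would record the Fourier factorization of a $\mathbb Z_9$ determinant $\prod_{y^9=1}F(1,y)$ over the cyclotomic fields $\mathbb Q(\zeta_9)\supset\mathbb Q(\zeta_3)\supset\mathbb Q$, splitting it as $f(1)\cdot N_{\mathbb Q(\zeta_3)/\mathbb Q}(\text{stuff})\cdot N_{\mathbb Q(\zeta_9)/\mathbb Q}(\text{stuff})$, which makes transparent both the divisibility by powers of $3$ and the appearance of norm forms from $\mathbb Z[\omega]$ — hence Type 1 versus Type 2 primes.

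For the \emph{odd} case: I must show (a) every such $D$ contains a Type 1 prime, and (b) every $3^3 m p$ with $p$ Type 1 and $\gcd(m,6)=1$ is achieved. For (b), multiplicativity of $S(G)$ lets me build $D$ as a product: take a known element of $S(\mathbb Z_{18})$ realizing the bare $3^3$ (an odd $3^4(2m+1)$-type value is available from Theorem \ref{easycyclic}, but I want $3^3$ exactly, so instead I construct directly an $F$ with $3\nmid D_1$, $3^3\parallel D_2$), then multiply by a unit-type $\mathbb Z_{18}$ element of determinant $p$ (available since the norm form $a^2-ab+b^2$ with the parity refinement from the Lemma, $p=N(a+3b\omega)$, $2\mid ab$, plugs into a $\mathbb Z_9$ — indeed $\mathbb Z_3$ — construction), and then by a coprime-to-$6$ factor $m$ via \eqref{Newman}. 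The real content is (a): I expect to show that when $3^3\parallel D$ the $3$-part forces the non-$3$ contribution from the $\mathbb Q(\zeta_9)$-norm factor of $D_2$ to be $\equiv$ a specific residue mod $9$ that, combined with oddness, can only be hit if some prime divisor of it is Type 1 — i.e. a $2$-is-not-a-cube condition mod that prime. This is where the Type 1/Type 2 dichotomy (2 a cube mod $p$ or not) and Lemma \ref{properties} do the work.

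For the \emph{even} case I would organize by the exact power $2^\ell\parallel D$. Since $D_1\equiv D_2\bmod 2$, evenness of $D$ forces both $D_1,D_2$ even, so $4\mid D$, and more: a $\mathbb Z_9$ determinant that is even is in fact... I would pin down the $2$-adic valuation constraints on a single $\mathbb Z_9$ determinant (analogues of the mod-$2$ behaviour used for $\mathbb Z_3\times\mathbb Z_3$ in Section \ref{productproofs}), getting that $2\mid D_i\Rightarrow 4\mid D_i$ is false in general but the \emph{combined} valuation $v_2(D)=v_2(D_1)+v_2(D_2)$ takes only the values named ($2$, $4$, $\ge 6$). Then: $v_2=2$ gives the $2^2\cdot3^3mp$ family — here the odd-part analysis of the previous paragraph re-enters and a Type 1 prime is again forced; $v_2=4$ gives $2^4\cdot 3^3 m$ with $\gcd(m,6)=1$, proved by an explicit construction plus \eqref{Newman}; $v_2\ge 6$ gives all $2^6\cdot 3^3 m$, $\gcd(m,3)=1$, again by construction (a suitable $F$ with large $2$-content in $D_2$) times \eqref{Newman}, and I must also check these three families are exhaustive, i.e. $v_2\in\{2,4\}\cup\{{\ge}6\}$ with $v_2=3,5$ impossible.

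The main obstacle, in both the odd case and the $v_2=2$ even case, is part (a)-type necessity: proving that the non-$3$, non-$2$ kernel of $D$ \emph{must} contain a Type 1 prime. I expect to handle it by showing the relevant norm-form factor of $D_2$ is $\equiv \pm 2\cdot 3^{?}\pmod{\text{something}}$ forcing $2$ to be a non-cube modulo at least one of its prime divisors (else a product of $x^2+27y^2$-primes would be $\equiv 1\bmod$ the obstruction), i.e. arguing multiplicatively in the class group / cubic-residue character $\chi_3(2)$; the extra twist here versus $\mathbb Z_3\times\mathbb Z_6$ is that for $\mathbb Z_{18}=\mathbb Z_2\times\mathbb Z_9$ the degree-$6$ factor over $\mathbb Q(\zeta_9)$ replaces the product of two degree-$2$ factors, so I would first reduce that degree-$6$ norm to a norm from $\mathbb Z[\omega]$ by a congruence/ramification argument mod the prime above $3$ before invoking the Type 1/Type 2 classification.
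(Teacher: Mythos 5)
Your skeleton (factor $D=D_1D_2$ with $D_1=\prod_{y^9=1}F(1,y)$, $D_2=\prod_{y^9=1}F(-1,y)$, split each over $\mathbb Q(\zeta_9)\supset\mathbb Q(\zeta_3)\supset\mathbb Q$, use the Type~1/Type~2 dichotomy for necessity and constructions plus multiplicativity for sufficiency) is the paper's, but two of your steps would fail as described and the crucial step is missing. For the odd constructions you propose to realize ``the bare $3^3$'' with one group-ring element and then multiply by a second element of determinant $p$. By the necessity half of the very theorem you are proving, no odd determinant with $3^3\parallel D$ is free of Type~1 primes, so your base element necessarily carries its own Type~1 prime $q$, and products $3^3qpm$ can never produce $3^3p$ for $p\neq q$. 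The representation $p=N(2(1+3B)+3(2A+1)\omega)$ has to be woven into the coefficients of a \emph{single} $F$ with $D_1=3^3p$ and $D_2=1$, which is what the paper does. Separately, your even-case organization by the exact power $2^\ell\parallel D$, with the claim that $\ell=3,5$ are impossible, is wrong: the family $2^2\cdot3^3mp$ allows $m$ even (only $\gcd(m,3)=1$ is required), so every $2$-adic valuation $\geq 2$ occurs once a Type~1 prime is present. The correct trichotomy is by \emph{where} the twos arise: since $2$ is inert in $\mathbb Z[\omega]$ and $\mathbb Z[\omega_9]$, a factor of $2$ in one of the complex norm factors forces $2^4\parallel D$ or $2^6\mid D$ (and then no Type~1 prime is needed), whereas if only $F(\pm1,1)$ are even one subtracts $(y^9-1)/(y-1)$ to reduce to the odd case and a Type~1 prime is again forced.

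The main gap, however, is the necessity claim itself: that an odd $D$ with $3^3\parallel D$ must contain a Type~1 prime. You only gesture at this (``a specific residue mod $9$ \dots cubic-residue character $\chi_3(2)$'') without an argument, and it is not a statement about $D_1$ and $D_2$ separately coupled by parity: for instance $1$ and $3^3\cdot 31$ both lie in $S(\mathbb Z_9)$ and are both odd, yet $3^3\cdot 31$ is not a $\mathbb Z_{18}$ determinant. The paper's proof uses the finer joint structure of $F(1,\cdot)$ and $F(-1,\cdot)$: assuming no Type~1 prime divides $D$, Lemma~\ref{properties} forces each of $F(\pm1,\omega)$ and $F(\pm1,\omega_9)F(\pm1,\omega_9^4)F(\pm1,\omega_9^7)$ to be $\pm\omega^j$ (times $\omega-1$ in the $x=1$ cases) modulo $6$ in $\mathbb Z[\omega]$, and then explicit polynomial congruences modulo $y^6+y^3+1$ relating these four quantities produce incompatible conditions modulo $2$ and modulo $3$. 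Some argument of this strength is indispensable; without it the proposal does not close in either the odd case or the $2^2$ even case.
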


We give the proofs in Section \ref{cyclicproofs}.


\section{Proof of Theorems \ref{not7}  and \ref{7odd}} \label{productproofs}

\begin{proof}[Proof of Theorem \ref{not7}]
Notice that $F(x,y,z)=x$ has $D=-1,$ so by multiplicativity, we 
immediately obtain $\pm m$ whenever we obtain $m$.

If $3\nmid D,$ then the $D_1,D_2\equiv \pm 1 \bmod 9$. If $D$ is odd, then plainly
$D\equiv \pm 1 \bmod 18,$ and we achieve  $1+18m$ with 
$$ F(x,y,z)=1+(1+x)m\Phi(y)\Phi(z),\;\; \Phi(y):=1+y+y^2. $$
For $3\nmid D$, $D$ even, we must have $D_1=2m_1$, $D_2=2m_2$ with $m_1,m_2\equiv \pm 4\bmod 9$
and we obtain $2^2(4+9m)(4+9k)$ from
$$ -1+\left(1+m(x+1)+k(1-x)\right) \Phi(y)\Phi(z). $$

For the odd multiples of 3, we get $3^6(6m+1)$ with
$$ 1+(x+1)y+m(x+1)\Phi(y)\Phi(z), $$
and $3^8(2m+1)$ with
$$\Phi(z)+y(1+z)+x(\Phi(z)+y)+m(1+x)\Phi(y)\Phi(z).$$

For the even $3^{\ell}\parallel D$ with $6\leq \ell <12,$ we must have $D_1=2\cdot 3^{\ell} m_1$, $3\nmid m_1$ and $D_2=2m_2$, $m_2\equiv \pm 4$ mod 9 (or vice versa).
For $\ell =6,$ we get $ 2^2\cdot 3^6(3m+1)(9k+4)$ from
$$ F(x,y,z)=1+(1+x)y-( (x+1)m+(1-x)k +1)\Phi(y)\Phi(z).$$
For ${\ell} \geq 8,$ we get $2^2\cdot 3^8m(4+9k)$ using
$$F(x,y,z)=yz^2+y^2\Phi(z)-x(\Phi(z)+yz)+ (m(x+1) +k(1-x))\Phi(y)\Phi(z). $$

For the even $3^{\ell} \parallel D$ with $\ell \geq 12,$ we can have $3^6$ from both $D_1$ and $D_2,$ so do not need the factor $\pm 4 \bmod 9$. We achieve all these $3^{\ell}m$.
We get  $3^{12}\cdot 2^2(1+3m) $  from
$$ 1+2y-(1+m(1+x))\Phi(y)\Phi(z), $$
the $3^{13}\cdot 2^2(1+3m) $  from
$$ \Phi(z)(1+y)-x(1+z-yz^2-y^2z)+m(1+x)\Phi(y)\Phi(z), $$
and the $3^{14}\cdot 2^2m$ from
$$ 1+2y-y\Phi(z)+\left( (1-x) +m(x+1)\right) \Phi(y)\Phi(z). \qedhere$$
\end{proof}

\begin{proof}[Proof of Theorem \ref{7odd}]

We first deal with the odd $D$.

\subsection{Achieving all  odd multiples of a Type 1 prime $p$.}

 Write $p$ in the form \eqref{Type1} and take $F(x,y,z)$ to be
\be \label{oddtype} -xz+(1+x)\left( \Phi(z)+y(1+z)+ \Phi(z)(1-y)(Ay+B)+ m\Phi(y)\Phi(z)\right). \ee
This has $F(-1,y,z)=z$ and plainly $D_2=1$. For $z=1,$ we get
$$F(1,y,1)=5+4y+6(1-y)(Ay+B)+6m\Phi(y) $$
with $F(1,1,1)=3^2(1+2m)$. When $y=\om$ or $\om^2,$ we get
$$ F(1,y,1)=(1-y)(6B+2+y(6A+3)),\;\; F(1,\om,1)F(1,\om^2,1)=3p. $$
When $z=\om$ or $\om^2,$ we have $F(1,y,z)=-z+2y(1+z)=-z(1+2yz)$,
 $$F(1,u\om^2,\om)F(1,u\om,\om^2)=(1+2u)^2,$$ 
and these terms, $u=1,\om,\om^2$ together contribute $3^4$.

\subsection{\bf Showing that the odd $D$ must contain a Type 1 prime $p$}

Suppose that $3^7\parallel D$ with $D$ odd. Switching $x$ to $-x$ as needed we suppose that $3^7\parallel D_1$ and $3\nmid D_2$. We need to show that a Type 1 prime must divide $D$.
Expanding in powers of $y-1$ and $z-1,$ we write 
$$F(x,y,z)=f(y,z)+(1-x)g(y,z) $$
with
$$ f(y,z)=a_0+a_1(y-1)+a_2(z-1) +b_1(y-1)^2+b_2(y-1)(z-1)+b_3(z-1)^2+C(y-1,z-1),$$
where $C$ contains the remaining degree 3 and higher terms,
$$C(y-1,z-1)=c_1(y-1)^2(z-1)+c_3(y-1)(z-1)^2+c_4(y-1)^2(z-1)^2, $$ 
and
$$ g(y,z)=h_0+h_1(y-1)+h_2(z-1) + Q(y-1,z-1),$$
where $Q(y-1,z-1)$ similarly contains the degree 2 and higher terms.

Since $3\mid D_1$ and $3\nmid D_2,$ we have $F(1,1,1)=a_0=3^k\alpha_0$, $3\nmid \alpha_0$ with $k\geq 1$ and $3\nmid F(-1,1,1)= h_0$. Plainly $1-\om\mid F(1,y,z)$ for the eight other terms
$y,z=1,\om,\om^2$, $y,z\neq 1,1$, contributing an additional $3^4$ so that $k=1,2$ or $3$.

Notice that $3\mid F(1,1,z)$ iff $3\mid a_2$ and $3\mid F(1,y,1)$ iff $3\mid a_1$
and that $3$ does not divide both $a_1$ and $a_2$, else $3$ divides all the $F(1,y,z)$ and $3^9\mid D$. Moreover, if $3\nmid a_1,a_2,$ we have 
$ 3\mid a_1(\om-1) +a_2(\om -1)$ if $a_1\equiv -a_2 \bmod 3 $
and $3\mid F(1,\om,\om),F(1,\om^2,\om^2)$, 
while $ 3\mid a_1(\om-1) +a_2(\om^2 -1)=(a_1+2a_2)(\om-1) +a_2(\om-1)^2$ if $a_1\equiv a_2 \bmod 3 $ and $3\mid F(1,\om,\om^2),F(1,\om^2,\om)$.
Hence, in all cases, we pick up another 3 from the $F(1,y,z)$, $y$ or $z$ complex and $k=1$ or $2$.
Switching $y$ and $z$ if $3\mid a_1$ and doing a substitution $y\mapsto yz$ or $yz^2$
in the $3\nmid a_1a_2$ case, we can assume that $3\mid F(1,1,z)$
and write $a_2=3\alpha_2$, $3\nmid a_1$, $k=1$ or 2. 
This also forces $1-\om \parallel F(1,\om,z),F(1,\om^2,z),$ so that these six terms contribute
exactly $3^3$ and $k=1$ or $2$.

For $k=2,$ we must have 
$$3\parallel F(1,1,z)=3^2\alpha_0+ 3\alpha_2(z-1)+b_3(z-1)^2$$ 
for $z=\om$ and $\om^2$, and $3\nmid b_3$.

For $k=1,$ we must have
$$(1-\om)^3 \parallel F(1,1,z)=3\alpha_0+ 3\alpha_2(z-1)+b_3(z-1)^2=3(\alpha_0-b_3+(\alpha_2 -b_3)(z-1) )$$ 
for $z=\om$ and  $\om^2$ and $b_3\equiv \alpha_0\bmod 3$ and $b_3 \not\equiv \alpha_2\bmod 3$. 

In either case, we have $3\nmid b_3$.


\vskip0.1in
We need first to characterize the elements in $\mathbb Z[\om]$ that do not lead to Type 1 primes.

\begin{lemma}\label{evenodd} Every $\alpha$ in $\mathbb Z[\om]$ with $\gcd(N(\alpha),6)=1$ can be written in the form
$$ \alpha =  u(A+3B\om),\quad 3\nmid A, \; A,B \text{ not both even,} $$
for some unit $u= \om^j, j=0,1,$ or 2.

\end{lemma}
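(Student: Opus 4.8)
The plan is to exploit the factorization of $3$ in $\mathbb{Z}[\omega]$ together with a parity bookkeeping argument. Recall $3 = -\omega^2(1-\omega)^2$, so $\lambda := 1 - \omega$ is the unique prime above $3$ (up to units), with $\mathbb{Z}[\omega]/\lambda \cong \mathbb{F}_3$, and the six units are $\pm 1, \pm\omega, \pm\omega^2$. Given $\alpha = s + t\omega$ with $\gcd(N(\alpha),6)=1$, the hypothesis $3 \nmid N(\alpha)$ says exactly that $\lambda \nmid \alpha$, i.e. $\alpha$ is a unit mod $\lambda$; concretely $s \not\equiv t \bmod 3$ is impossible (that would force $N(\alpha) = s^2 - st + t^2 \equiv 0$), so in fact we have $s \equiv t \bmod 3$ fails or holds — I would simply note $N(\alpha) = (s-t)^2 + st \equiv 0 \bmod 3$ iff $s \equiv t \bmod 3$, hence $3 \nmid N(\alpha)$ forces $s \not\equiv t \bmod 3$. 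Since $\omega \equiv 1 \bmod \lambda$ while $1 - \omega = \lambda$, multiplying $\alpha$ by a power of $\omega$ permutes things mod $3$; the key point is that among $\alpha, \omega\alpha, \omega^2\alpha$ one can arrange the coefficient of $\omega$ to be divisible by $3$. Indeed $\omega\alpha = -t + (s-t)\omega$ and $\omega^2\alpha = (t-s) - s\omega$, so the three $\omega$-coefficients are $t,\ s-t,\ -s$, whose sum is $0 \bmod 3$; since not all of $s,t$ are $\equiv 0$ (as $\lambda \nmid \alpha$), and they are not both $\equiv$ each other in a way that... — more carefully, the three values $t, s-t, -s$ sum to $-s+t-t+s=0$ wait that's $0$ identically; modulo $3$ their sum is $0$, and I claim at least one is $\equiv 0 \bmod 3$: if none were, all three would be $\equiv \pm 1$, but three nonzero residues summing to $0 \bmod 3$ must be $\{1,1,1\}$ or $\{2,2,2\}$ or $\{0,1,2\}$-type — the all-equal cases would give $t \equiv s-t \equiv -s$, i.e. $s \equiv -t$ and $s \equiv 2t$, so $3t \equiv 0$, $t \equiv 0$, contradiction. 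Hence exactly one of the three is $\equiv 0 \bmod 3$; choosing that power $\omega^j$ gives $\omega^j \alpha = A + 3B'\omega$ with the right shape, and replacing by $-\omega^j\alpha$ if needed (which does not change $N$) lets me assume $3 \nmid A$: indeed $3 \mid A$ would force $3 \mid N = A^2 - 3AB'\cdot(\cdots)$, contradicting $3 \nmid N(\alpha)$.

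Next I handle the parity condition "$A, B$ not both even." Here $\gcd(N(\alpha), 2) = 1$ means $\alpha$ is odd, i.e. $\alpha \not\equiv 0 \bmod 2$ in $\mathbb{Z}[\omega]$ (note $2$ is inert in $\mathbb{Z}[\omega]$, so $\mathbb{Z}[\omega]/2 \cong \mathbb{F}_4$, and $\alpha$ is a nonzero element there). Writing $\omega^j\alpha = A + 3B'\omega$, oddness says $(A, 3B') = (A, B') \not\equiv (0,0) \bmod 2$. If $A$ is odd we are done with $B := B'$. If $A$ is even, then $B'$ must be odd; now I multiply by $-\omega^{\pm 1}$ to shift: but that breaks the $3 \mid$ coefficient structure. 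Instead, since I have freedom in the choice of representative $B$ only up to the original $\alpha$, the cleaner route is: if $A$ even (forcing $B'$ odd), replace $\alpha$ by $-\omega^j \alpha \cdot (\text{nothing})$ — actually one uses that the three candidate forms $\omega^i \alpha$ for the \emph{other} two values of $i$ may also, after adjusting, be put in the shape $A + 3B\omega$; but only one has $3$-divisible $\omega$-coefficient. So the honest fix: when $A$ is even and $B'$ odd, note $\omega^j\alpha = A + 3B'\omega = (A + 3B') - 3B'(1-\omega) = (A+3B') + (-3B')\cdot(-\lambda)$ — hmm, that changes the ring-element. The correct observation is simply that the integer $A$ is only determined mod $3$-nothing; rather $A+3B'\omega$ with $A$ even, $B'$ odd can be rewritten as $A' + 3B\omega$ with $A' = A$, no change. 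So in that case I instead go back and pick the representative differently: among $\pm\omega^i\alpha$, $i=0,1,2$, exactly one (up to $\pm$) has the $\omega$-coefficient divisible by $3$; that determines $(A, B')$ up to overall sign. Overall sign flips $(A,B') \to (-A,-B')$, preserving parities. So if that $(A,B')$ has $A$ even, $B'$ odd, I am stuck with this representation — unless I exploit a different decomposition. This is where the real content is: I expect this parity step to be the main obstacle, and I anticipate the resolution is that when $A$ is even one instead writes $\alpha = u(A + 3B\omega)$ by absorbing a factor: e.g. $A + 3B'\omega$ with $A$ even, $B'$ odd, times $\omega^{\pm1}$, re-expanded and then a further $\omega$-multiplication, cycles the $\omega$-coefficient divisibility among a \emph{shifted} lattice; more likely the paper uses the identity $A+3B'\omega = -\omega^2(A + 3B'\omega)\cdot(\cdots)$ or simply notes that "not both even" is automatic once $3\nmid N$ and $2 \nmid N$, because $A$ even and $B'$ odd gives $N = A^2 - 3AB' + 9B'^2 \equiv 0 - 0 + 9 \equiv 1 \bmod 2$ (odd, fine) but then checks $A \equiv 0, B' \equiv 1$ forces... nothing. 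So actually "not both even" may simply be the statement that $(0,0) \bmod 2$ is excluded, which \emph{is} just oddness of $N$; i.e. "not both even" is weaker than "$A$ odd," and the lemma only claims the former. If so, the parity step is trivial: $(A, 3B') \not\equiv (0,0)$ means $A$ and $B'$ not both even (since $3B'$ even $\iff B'$ even), and we are done.

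Re-reading the statement, "$A,B$ not both even" is indeed the weak condition, so the logical skeleton is: (1) translate $\gcd(N(\alpha),3)=1$ into $\lambda \nmid \alpha$ and $\gcd(N(\alpha),2)=1$ into $\alpha \not\equiv 0 \bmod 2$; (2) run the "sum of three $\omega$-coefficients is $0 \bmod 3$, so exactly one vanishes" argument to find $j \in \{0,1,2\}$ with $\omega^j\alpha = A + 3B\omega$; (3) check $3 \nmid A$ from $3 \nmid N(\alpha)$; (4) check $A, B$ not both even from $2 \nmid N(\alpha)$, since if both were even then $2 \mid \omega^j\alpha$ hence $2 \mid \alpha$, hence $4 = N(2) \mid N(\alpha)$, contradiction; (5) allow the unit $u = \omega^{-j} \in \{1, \omega, \omega^2\}$ (rewriting $\omega^{-j}$ as $\omega^{2}$ or $\omega$ when $j=1,2$). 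The only genuinely delicate point is step (2)'s case analysis showing the three residues $t, s-t, -s \bmod 3$ cannot all be nonzero, which I sketched above via the all-equal subcase; I would write that out carefully but it is short. I do not expect to need anything beyond elementary arithmetic in $\mathbb{Z}[\omega]$ and the ramification/inertness facts for $3$ and $2$.
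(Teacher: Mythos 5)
Your overall strategy coincides with the paper's. The paper writes $\alpha=A+B\omega$ and runs a three-way case analysis ($3\mid B$: done; $3\mid A$: multiply by $\omega^2$, giving $\omega$-coefficient $-A$; $3\nmid AB$: show $A\equiv B\bmod 3$ and multiply by $\omega$, giving $\omega$-coefficient $A-B$), which is exactly your observation that the $\omega$-coefficients of $\alpha,\omega\alpha,\omega^2\alpha$ are $t$, $s-t$, $-s$ and that one of them must vanish mod $3$. Your handling of the two side conditions is correct and in fact more explicit than the paper's (which leaves them implicit): $3\nmid A$ because otherwise $3\mid\omega^j\alpha$ and $9\mid N(\alpha)$, and ``$A,B$ not both even'' because otherwise $2\mid\omega^j\alpha$ and, $2$ being inert, $4\mid N(\alpha)$. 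Your eventual reading of the parity clause as nothing more than oddness of $\omega^j\alpha$ is the right one, so the long detour in your middle paragraph is unnecessary but harmless.

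Two local computations in your step (2) are wrong as written, though both are easily repaired. First, $N(\alpha)=s^2-st+t^2\equiv(s+t)^2\bmod 3$, so $3\mid N(\alpha)$ iff $s\equiv -t\bmod 3$ (equivalently $\lambda\mid\alpha$, since $\omega\equiv 1\bmod\lambda$); your stated criterion ``iff $s\equiv t$'' is incorrect. Second, in the all-nonzero subcase you derive $s\equiv -t$ and $s\equiv 2t$ and conclude $t\equiv 0$; but $2t\equiv -t\bmod 3$, so the two congruences are identical and yield no contradiction by themselves (for example $\alpha=2+\omega$ has all three coefficients $\equiv 1\bmod 3$). The correct finish is that $t\equiv s-t\equiv -s\bmod 3$ forces $s+t\equiv 0\bmod 3$, hence $\lambda\mid\alpha$ and $3\mid N(\alpha)$, contradicting the hypothesis --- which is precisely how the paper rules out its case $A\equiv -B\bmod 3$. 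With those two corrections your proof is complete and is essentially the paper's argument.
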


Hence, we can write any prime $p\equiv 1\bmod 3$ in the form $p=N(a+3b\om)$.
If $2\nmid b,$ then $p=N(a+3b\om^2)=N(a-3b-3b\om),$ and we have a representation
with $a$ even (as well as one with $a$ odd) giving \eqref{types}

We shall say that $\alpha$ with $\gcd(N(\alpha),6)=1$  is  {\em even} if $2\mid B$ and {\em odd} if $2\nmid B$. We note the following properties

\begin{lemma}\label{properties}
i) An $\alpha$ can not be both even and odd.

ii) Conjugates have the same
`parity'.

iii) If $\alpha$ and $\alpha'$ are even, then $\alpha \alpha'$ is even.
\end{lemma}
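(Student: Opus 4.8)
The plan is to upgrade Lemma \ref{evenodd} to a uniqueness statement and then read off all three parts. Write $\alpha = a + b\omega$; since $N(\alpha) = a^2 - ab + b^2 \equiv (a+b)^2 \bmod 3$, the standing hypothesis $\gcd(N(\alpha),6)=1$ gives $3 \nmid a+b$. The three elements $\alpha$, $\omega^{-1}\alpha$, $\omega^{-2}\alpha$ have $\omega$-coefficients $b$, $-a$, $a-b$ respectively (using $\omega^2 = -1-\omega$), and these sum to $0$; a quick check shows that exactly one of them is divisible by $3$ — if none were, all three would be equal mod $3$ and hence $3\mid a+b$, and if two were, the third would be too, again forcing $3 \mid a+b$. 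So there is a unique $j\in\{0,1,2\}$ with $\omega^{-j}\alpha = A + 3B\omega$, $3\nmid A$, and for that $j$ the pair $(A,B)$ is determined outright. Thus $B\bmod 2$ is a genuine invariant of $\alpha$, which is part (i).

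For part (ii) I would conjugate directly: from $\alpha = \omega^j(A+3B\omega)$ we get $\overline{\alpha} = \omega^{-j}(A + 3B\omega^2) = \omega^{-j}\big((A-3B) - 3B\omega\big)$, and since $A - 3B \equiv A \not\equiv 0 \bmod 3$, this is a representation of $\overline\alpha$ of the Lemma \ref{evenodd} shape with parameters $A-3B$ and $-B$. By the uniqueness above, the parity of $\overline\alpha$ is $(-B) \bmod 2 = B \bmod 2$, the parity of $\alpha$.

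For part (iii), with $\alpha = \omega^j(A+3B\omega)$ and $\alpha' = \omega^{j'}(A'+3B'\omega)$, expanding via $\omega^2 = -1-\omega$ gives
\[ (A+3B\omega)(A'+3B'\omega) = (AA'-9BB') + 3(AB' + A'B - 3BB')\omega, \]
so $\alpha\alpha' = \omega^{j+j'}(A''+3B''\omega)$ with $A'' = AA'-9BB'$, $B'' = AB'+A'B-3BB'$ (exponent read mod $3$). Here $3 \nmid A''$ since $3\nmid AA'$, and $A'', B''$ are not both even because $N(\alpha\alpha') = N(\alpha)N(\alpha')$ is odd; so this is a legitimate Lemma \ref{evenodd} representation of $\alpha\alpha'$. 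If $\alpha,\alpha'$ are even then $2\mid B$ and $2\mid B'$ make each term of $B''$ even, so $\alpha\alpha'$ is even by uniqueness.

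I do not anticipate a serious obstacle; the one point needing care is that Lemma \ref{evenodd} as stated only asserts \emph{existence} of the representation, so the elementary fact that exactly one of $b$, $-a$, $a-b$ is divisible by $3$ is really what is doing the work — it is what makes "parity" well defined and at the same time settles part (i), after which (ii) and (iii) are one-line computations.
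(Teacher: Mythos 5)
Your proof is correct, and parts (ii) and (iii) are computed exactly as in the paper: conjugation sends $A+3B\omega$ to $\omega^{-j}\bigl((A-3B)-3B\omega\bigr)$, and your product formula $B''=AB'+A'B-3BB'$ is the identical identity the authors use, with the same parity conclusion. The only genuine difference is in part (i). The paper argues by contradiction directly on the two normal forms: it equates $\pm\omega^j(1+6A_1+6B_1\omega)$ with $1+3A_2+3(1+2B_2)\omega$, reduces mod $3$ to force $\pm\omega^j=1$, and then gets a parity contradiction in the $\omega$-coefficient. You instead upgrade Lemma \ref{evenodd} to a uniqueness statement by showing that exactly one of $\alpha$, $\omega^{-1}\alpha$, $\omega^{-2}\alpha$ has $\omega$-coefficient divisible by $3$ (the coefficients $b$, $-a$, $a-b$ sum to zero and $3\nmid a+b$ because $N(\alpha)\equiv(a+b)^2\bmod 3$). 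That is a slightly stronger and arguably cleaner route: it makes the parity a well-defined invariant of $\alpha$ once and for all, and it is also what licenses speaking of \emph{the} representation in parts (ii) and (iii) --- a point the paper leaves implicit and which you rightly flag as the load-bearing step. Both arguments are elementary and of comparable length; yours buys a reusable uniqueness fact, the paper's is marginally more direct.
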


Notice that this says that Type 1 or Type 2 primes are the norms of  odd or even primes in $\mathbb Z[\om]$ respectively (and cannot be both Type 1 and Type 2).  Lemma \ref{properties} also says that an $\alpha$ in $\mathbb Z[\om]$ with $\gcd(N(\alpha),6)=1$ and $N(\alpha)$ not divisible by a Type 1 prime must be even and, since $A\equiv \pm 1 \bmod 6$,  must take the form
$$ \alpha = \pm \om^j \bmod 6. $$

\begin{proof}[Proof of Lemma \ref{evenodd}] Suppose that $\alpha=A+B\om$. If $3\mid B,$ then we are done. If $3\mid A,$ 
we take 
$$\om^2 \alpha =A\om^2+B=(B-A) -A\om. $$
If $3\nmid AB,$ then  $A\equiv B \pmod 3,$ and we take
$$ \om \alpha = A\om +B\om^2=-B+(A-B)\om. $$
We do not have $A\equiv -B \pmod 3$, else $\alpha=(A+B)+B(\om-1)$ has $3\mid N(\alpha)$.
\end{proof}

\begin{proof}[Proof of Lemma \ref{properties}]
i) If $\alpha$ had both even and odd representations then we could write
$$ \pm \om^j (1+6A_1+6B_1\om)=1+3A_2 + 3(1+2B_2)\om $$
Since $3\mid \pm \om^j-1,$ we must have $\pm \om^j=1,$ but $2\nmid 3A_2+3\om$.

\nni
ii) If $\alpha=\om^j(A+3B\om),$ then it's conjugate is $\om^{2j}(A+3B\om^2)=\om^{2j}(A-3B -3B\om)$.

\nni
iii)  Observe that
$ (A_1+3B_1\om)(A_2+3B_2\om) =A_3+3B_3\om$
with $A_3=A_1A_2-9B_1B_2,$ 
$B_3=A_1B_2+A_2B_1-3B_1B_2, $
and $B_3$ will be even if both $B_1$ and $B_2$ are. Hence the product of two evens is even.
Note,  $B_3$  will be  odd if only  one of $B_1$,$B_2$ is even, so the product of an odd and an even is odd. The product of two odds can be either even or odd.
\end{proof}

\nni
We suppose $D$ contains no Type 1 primes. Then the
$F(1,\om,z)$, $z=1,\om,\om^2$ are all of the form $(1-\om)(\pm \om^j \bmod 6),$
and the $F(-1,\om,z)$ are of the form $\pm \om^j \bmod 6$. Since $\om^2-1=2(\om-1)+(\om-1)^2,$ we get
\begin{align*}
F(1,\om,1) & =3^k\alpha_0+ a_1(\om-1) + b_1(\om-1)^2  \bmod (1-\om )^3,\\
F(1,\om,\om) & =3^k\alpha_0+ a_1(\om-1) + (b_1+b_2+b_3)(\om-1)^2 \bmod (1- \om )^3,\\
F(1,\om,\om^2) &= 3^k\alpha_0+ a_1(\om-1) + (b_1+2b_2+b_3)(\om-1)^2 \bmod (1-\om )^3.
\end{align*}
The difference between any two are divisible by $(1-\om)^2,$ so that we must have the same 
$\pm $ sign in each case. Replacing $F$ by $-F$ as necessary, we assume they are 
$(1-\om)\om^{t_1}$, $(1-\om)\om^{t_2},$ $(1-\om)\om^{t_3} \bmod 6(1-\om),$ respectively.
Similarly,
\begin{align*}F(-1,\om,1) & =(1-\om)\om^{t_1}+ 2h_0+ 2h_1(\om-1) \bmod 6, \\
F(-1,\om,\om) &= (1-\om)\om^{t_2}+ 2h_0+ (2h_1+2h_2)(\om-1) \bmod 6, \\
F(-1,\om,\om^2) &= (1-\om)\om^{t_3}+ 2h_0+ (2h_1-2h_2)(\om-1) \bmod 6, 
\end{align*}
and, since $(1-\om)$ divides the difference of any two of these, all again have the same $\pm$ sign, and we can write them $\ve \om^{s_1}, \ve \om^{s_2},\ve \om^{s_3} \bmod 6$
with $\ve=1$ or $-1$.

We break into two cases:

\vskip0.1in
\nni
{\bf (i) Suppose that $3\mid b_2$.} 

Considering $F(1,\om,\om)-F(1,\om,\om^2),$ we have  $(1-\om)^3 \mid (1-\om)(\om^{t_2}-\om^{t_3})$ and $t_3=t_2$. Since $3\nmid b_3,$ considering
$F(1,\om,1)-F(1,\om,\om),$ we have $(1-\om)^2 \parallel (1-\om)(\om^{t_1}-\om^{t_2})$ 
and $t_1\neq t_2.$

From $F(-1,\om,\om)-F(-1,\om,\om^2),$ we have $\ve(\om^{s_2}-\om^{s_3}) \equiv 4h_2(\om-1) \bmod 6,$ and  $2\mid (\om^{s_2}-\om^{s_3})$ gives $s_3=s_2$
and $3\mid h_2$. Hence from $F(-1,\om,1)-F(-1,\om,\om),$ we have 
$$ \ve (\om^{s_1}-\om^{s_2}) \equiv (1-\om)(\om^{t_1}-\om^{t_2}) \bmod 6. $$
Since $(1-\om)^2$ divides the righthand side we must have $s_1=s_3$ and the leftside is zero, but 2 does not divide the rightside.

\vskip0.1in
\nni
{\bf (ii) Suppose that $3\nmid b_2$.} 

This time we have  $t_2\neq t_3$ (since $(1-\om)^2\parallel  F(1,\om,\om)-F(1,\om,\om^2)$). We have $t_1=t_2$ if $b_2\equiv -b_3 \bmod 3$  (since $(1-\om)^3\mid  F(1,\om,\om)-F(1,\om,1)$), and $t_1=t_3$ if $b_2\equiv b_3 \bmod 3$  (since $(1-\om)^3\mid  F(1,\om,\om^2)-F(1,\om,1)$).

Hence, taking $F(-1,\om,1)-F(-1,\om,\om)$ or $F(-1,\om,1)-F(-1,\om,\om^2)$ as 
$b_2\equiv -b_3$ or $b_3 \bmod 3,$ we get $\ve(\om^{s_1}-\om^{s_2}) \equiv - 2h_2(\om-1)\bmod 6$ or  $\ve(\om^{s_1}-\om^{s_3}) \equiv 2h_2(\om-1)\bmod 6. $ Divisibility by 2 forces $s_1=s_2$ or $s_1=s_3,$ and in either case $3\mid h_2$. 
Hence $F(-1,\om,\om)-F(-1,\om,\om^2)$ gives
$$ \ve( \om^{s_2}-\om^{s_3}) \equiv (1-\om)(\om^{t_2}-\om^{t_3}) \bmod 6, \quad t_2\neq t_3. $$
But, again,  $3$ dividing the left side forces $s_2=s_3$ (and the lefthand side is zero), and we get a contradiction from 2 dividing the righthand side.

\subsection{Achieving the even values}

We will need an alternative way to write the Type 1 primes $p\equiv 4$ or 7 mod 9.
\begin{lemma} If $p$ is a Type 1 prime,  then there are integers $A,B$ with 
$$ p=\begin{cases} N(-1-3\om +6(\om-1)(A\om+B)),  & \text{ if $p\equiv 7 \bmod 9,$}\\
N(1-3\om +6(\om-1)(A\om+B)),  & \text{ if $p\equiv 4 \bmod 9$.}\end{cases}$$
\end{lemma}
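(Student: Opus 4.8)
The plan is to build the required element by conjugating a standard representation of $p$. Since $p$ is a Type 1 prime, \eqref{Type1} gives $p = N\big(2(1+3B_0) + 3(1+2A_0)\om\big)$ for some $A_0,B_0\in\mathbb Z$; writing $\alpha$ for this element and using $\om^2=-1-\om$, its conjugate is $\bar\alpha = \big(2(1+3B_0)-3(1+2A_0)\big) - 3(1+2A_0)\om =: c+3d\,\om$, where $c=2(1+3B_0)-3(1+2A_0)$ and $d=-(1+2A_0)$. Then $N(c+3d\,\om)=N(\alpha)=p$, and $c,d$ are both odd, while $3\nmid c$ (else $3\mid p$). So the statement reduces to: for a suitable $\ve\in\{\pm1\}$ and integers $A,B$ one has $\delta:=c+3d\,\om = \ve-3\om+6(\om-1)(A\om+B)$, with $\ve=1$ when $p\equiv4\bmod9$ and $\ve=-1$ when $p\equiv7\bmod9$.

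I would first reduce this to congruences. Expanding, $\ve-3\om+6(\om-1)(A\om+B) = \big(\ve-6(A+B)\big)+\big({-3}+6(B-2A)\big)\om$, so $\delta$ has this form iff $A+B=(\ve-c)/6$ and $B-2A=(d+1)/2$. The latter is always an integer (as $d$ is odd), the former is an integer exactly when $c\equiv\ve\bmod6$, and the linear system $A+B=u$, $B-2A=v$ has the integer solution $A=(u-v)/3$, $B=u-A$ precisely when $3\mid u-v$, i.e.\ when $c+3d\equiv\ve-3\bmod18$. So it is enough to pick the sign of $\delta$ (we may freely replace $\delta$ by $-\delta$, negating both $c$ and $d$) so that $c\equiv\ve\bmod6$ with $\ve$ as dictated by $p\bmod9$, and then check $c+3d\equiv\ve-3\bmod18$.

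Both facts follow from a single identity: since $p=N(c+3d\,\om)=c^2-3cd+9d^2$, we get $(c+3d)^2=c^2+6cd+9d^2=p+9cd$, and $cd$ odd gives $(c+3d)^2\equiv p+9\bmod18$. As $p$ is an odd prime with $p\equiv1\bmod3$, it is $\equiv13\bmod18$ when $p\equiv4\bmod9$ and $\equiv7\bmod18$ when $p\equiv7\bmod9$, so $(c+3d)^2\equiv4$ or $\equiv16\bmod18$ respectively. Now $c+3d$ is even and prime to $3$, and among such residues mod $18$ the square equals $4$ only for $\pm2$ and equals $16$ only for $\pm4$; hence $c+3d\equiv\pm2\bmod18$ (resp.\ $\pm4\bmod18$). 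Flipping the sign of $\delta$ if needed we arrange $c+3d\equiv-2\bmod18$ (resp.\ $-4\bmod18$), which is exactly $\ve-3$ for $\ve=1$ (resp.\ $\ve=-1$); reducing mod $3$ and using that $c$ is odd then gives $c\equiv\ve\bmod6$ for free. Taking $u=(\ve-c)/6$, $v=(d+1)/2$, the integers $A=(u-v)/3$, $B=u-A$ complete the proof.

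The only delicate point is that the construction requires $d$ odd, which makes $(d+1)/2$ an integer; this is precisely the Type 1 hypothesis (for a Type 2 prime every representation $c+3d\,\om$ of $p$ has $d$ even, so no such identity is available). Beyond the single modular computation $(c+3d)^2\equiv p+9\bmod18$ and solving a $2\times2$ integer system, there is nothing hard here.
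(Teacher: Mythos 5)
Your proof is correct. It shares the paper's skeleton --- start from the Type 1 representation \eqref{Type1}, pass to the conjugate so that the element becomes $c+3d\om$ with $c,d$ both odd, and then show that (after a sign flip) it lies in the required coset of $6(\om-1)\mathbb Z[\om]$ --- but the way you identify the coset is genuinely different. The paper manipulates the element directly in $\mathbb Z[\om]$: it writes the conjugate as $-1-3\om+6(A+B)+6A(\om^2-1)$, absorbs $18r=6(1-\om)(2+\om)r$ into the $6(\om-1)(\cdot)$ term so that only $6u$ with $u\in\{0,\pm1\}$ survives, matches $u$ to the residue of $p$ modulo $9$, and handles $u=1$ by one further shift by $6(\om-1)$. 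You instead translate the whole problem into the single integer congruence $c+3d\equiv\ve-3\bmod{18}$ and certify it with the identity $(c+3d)^2=c^2-3cd+9d^2+9cd=p+9cd\equiv p+9\bmod{18}$, followed by the short check that among even residues prime to $3$ only $\pm2$ square to $4$ and only $\pm4$ square to $16$ modulo $18$. This makes the dependence on $p\bmod 9$ and on the Type 1 parity hypothesis ($cd$ odd) completely explicit, and it also explains cleanly why $p\equiv 1\bmod 9$ falls outside the statement (there $c+3d\equiv\pm8$, giving $\ve-3\equiv\pm8$, which no $\ve=\pm1$ satisfies); the paper's version is a bit shorter but leaves the correspondence between $u$ and $p\bmod 9$ as an unstated norm computation. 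Both arguments are complete.
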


\begin{proof}
We write $p=N((2+6B)+(6A+3)\om^2)=N(-1-3\om+6(A+B)+6A(\om^2-1))$. 
Writing $A+B=3r+u$ with $u=0,-1,$ or 1, we have $6\cdot 3r=6(1-\om)(2+\om)r$
and $p=N(-1-3\om+6u -6(\om -1)(A_1\om +B_1))$ with $u=0$ corresponding to $p\equiv 7$ mod 9, $u=1$ to $p\equiv 4$ mod 9, and $u=-1$ to $p\equiv 1$ mod 9.
For $u=0$ we are done. For $u=1,$ we write
$p=N(-5+3\om + 6(\om -1)(A_1\om+B_1))=N(1-3\om +6(\om -1)(A_1\om +B_1+1)).$
\end{proof}

\vskip0.1in
\nni
We achieve the $3^7\cdot 2^6m$ with
$$ F(x,y,z)= 1+y-(1+x)y^2z^2+(1+x)m\Phi(y)\Phi(z) $$
and $3^7\cdot 2^4(2m+1)(7+18k)$ from a shift of this changing only the $F(\pm 1,1,1)$
$$ 1+y-(1+x)y^2z^2-(1+(1+x)m+(1-x)k)\Phi(y)\Phi(z) $$
with $F(1,1,1)=18$ and $F(-1,1,1)=2$ becoming $-9(1+2m)$ and $-(7+18k)$.

We achieve \eqref{typea} using an adjustment of \eqref{oddtype}
$$-xz+(1+x)\left( \Phi(z)+y(1+z)+ \Phi(z)(1-y)(Ay+B)\right)-\left(1+m(1+x)+k(1-x)\right)\Phi(y)\Phi(z). $$
This has $F(-1,1,1)=-2(4+9k)$ and $F(1,1,1)=-18m$ with the other values unchanged.

To achieve the \eqref{typeb} values $3^7\cdot 2^2m(2+9k)p,$ we write $p=N(-1-3\om+6(\om-1)(A\om+B) )$ and take
$$ x+(1+x)z-z\Phi(y)-((1+x)m+(1-x)k)\Phi(y)\Phi(z)+ (1-x)\Phi(y)(z-1)(Az+B ).$$
We have $F(1,1,1)=-18m$, for $z=\om,\om^2,$ we have $F(1,1,z)=1-z$ contributing 3, 
while for $y=\om,\om^2,$ we have $F(1,y,z)=1+2z$ together contributing $3^4$. So $D_1=-3^7\cdot 2 m$.  We have $F(-1,y,z)=-1-z\Phi(y)-2k\Phi(y)\Phi(z)+2\Phi(y)(z-1)(Az+B)$.
The terms $y=\om,\om^2$ have $F(-1,y,z)=-1$, so contribute 1, while
$F(-1,1,1)=-2(2+9k)$. The terms with $z=\om,\om^2$ have
$ F(-1,1,z)=-1-3z +6(z-1)(Az+B)$ contributing $p$. Hence, $D_2=-2p(2+9k).$

For the \eqref{typec}, we write
$p=N(1-3\om+6(\om-1)(A\om +B) )$ and achieve $3^7\cdot 2^2 mp$ with
$$F=1+(1+x)z-z\Phi(y)-(1+x)m\Phi(y)\Phi(z)+ (1-x)\Phi(y)(z-1)(Az+B ).$$
This has $F(-1,1,1)=-2$, for $z=\om,\om^2$ has $F(-1,1,z)=1-3z+6(z-1)(Az+B)$ contributing $p$, and for $y=\om,\om^2$ has $F(-1,y,z)=1$. So, $D_2=-2p$. 
It has $F(1,y,z)=1+2z-z\Phi(y)-2m\Phi(y)\Phi(z)$ and $F(1,1,1)=-18m$, when $z=\om,\om^2$ the $F(1,1,z)=1-z$ contributing $3$, when $y=\om,\om^2,$ the $F(1,y,z)=1+2z$ together contributing $3^4$. Hence, $D_1=-3^7\cdot 2m.$

\subsection{Showing that the even values take the stated forms}
Suppose now that we have a $D$ with $3^7\parallel D$ and $2^j\parallel D$, $2\leq j\leq 5$. We assume that $3\mid D_1$. Since 2 stays prime in $\mathbb Z[\om]$, we know that if a 2 comes from a term $F(x,y,z)$ with $y$ or $z$ complex, then $2\mid F(\pm 1,y,z), F(\pm 1,\overline{y},\overline{z})$ contributing a $2^4$ to $D$. In particular we can have no other twos (if $2^2\mid F(x,y,z),$ then $2^2\mid F(x,\overline{y},\overline{z})$ and $2^6\mid D$), and we have $D_1=3^7\cdot 2^2(2m+1)$, $D_2=2^2t$ with $t\equiv \pm 2 \bmod 9$ odd. These are the \eqref{niceeven}.

That just leaves all the $2^j\parallel D,$ $2\leq j\leq 5$, with $2\mid F(\pm 1,1,1)$
and none of the other terms. In the proof of Theorem \ref{7odd}, we showed that the $3^l\parallel D$ have $3$ or $3^2\parallel F(1,1,1)$, so if we take $F(x,y,z)-3\Phi(y)\Phi(z),$ then this will still have $3^7\parallel D$, the complex terms are unchanged,  but now  $D$ is odd. We showed in the proof of Theorem \ref{7odd} that the complex terms in an odd $D$ with $3^7\parallel D$ must contribute  a Type 1 prime $p$, so the same was true for our original $F$. Assume $3\mid D_1$. 
If the Type 1 prime $p\mid D_1,$ then $D_1=3^7\cdot 2mp$, $D_2=2t$, $t\equiv \pm 4$ mod 9; that is,  it must be  of the form \eqref{typea}.

If the Type 1 prime  $p\mid D_2,$ then we have $D_1=3^7\cdot 2m$, $D_2=2\cdot tp$ with $pt\equiv \pm 4$ mod 9.  
If $p\equiv 7$ mod 9, then $t\equiv \pm 2$ mod 9, giving us type \eqref{typeb}. 
The  $p\equiv 4$ mod 9 are covered by \eqref{typec}. 
If $p\equiv 1$ mod 9, then $t\equiv \pm 4$ mod 9 and these are already covered in \eqref{typea}.
\end{proof}

\section{Proof of Theorems \ref{easycyclic} and \ref{cyclic3}} \label{cyclicproofs}

In this section,  we will write $\om=e^{2\pi i/3}$ and $\om_9=e^{2\pi i /9}$.
We write 
$$N(f(\om))=f(\om)f(\om^2),\quad N_9(f(\om_9))=f(\om_9)f(\om_9^2)f(\om_9^4)f(\om_9^5)f(\om_9^7)f(\om_9^8),$$
 for the norms from $\mathbb Z[\om]$ and $\mathbb Z[\om_9]$ to $\mathbb Z,$
noting that $f(\om_9)f(\om_9^4)f(\om_9^7)$ and $f(\om_9^2)f(\om_9^5)f(\om_9^8)$
will be in $\mathbb Z[\om]$.

\begin{proof}[Proof of Theorem \ref{easycyclic}] The $m$ with $\gcd(m,18)=1$ and the $18^2m,$ any $m,$ follow from Newman's classical  result for the cyclic case \eqref{Newman}.

We get $2^2$ and $2^3$ from
$ 1+y$ and $1+(1+x)y+y^2.$
Multiplicativity then gives us all the $2^km$, $\gcd(m,6)=1$, $k\geq 2$.

We get $3^4(2m+1)$ with
$$ (1+x) + y^2+xy^3+y^4+xy^5+y^6+xy^7+y^8+m(1+x)(y^9-1)/(y-1). \qedhere$$
\end{proof}

\begin{proof}[Proof of Theorem \ref{cyclic3}] We first show how we achieve the values. Writing our Type 1 prime $p$ in the form $p=N(2(1+3B) +3(2A+1)\om),$
then
$$ 1+y+y^2-y^5-y^8+x(1-y-y^2+y^4+y^6+y^7)
+(1+x)(Ay+B)(y-1)(1+y^3+y^6)$$
has $D_1=3^3p, D_2=1$ and $D=3^3p$.

We get $2^4\cdot 3^3$ from
$$ 1+y+y^2-y^8+x(y+y^3+y^6-y^7-y^8).$$

The other values follow from multiplicativity, since in Theorem \ref{easycyclic} we obtained
all $\gcd(m,6)=1$ and all $2^k$ with $k\geq 2$.

\vskip0.1in
It remains to show that a determinants must be of the stated form; we separate into $D$ odd and $D$ even.

\vskip0.1in
\nni
{\bf $D$ is odd}
 
In this case, we need to show that the odd $D$ with $3^3\parallel D$ must contain
a Type 1 prime $p$. We assume that $3\mid D_1$ so that $3\parallel F(1,1), N(F(1,\om)), N_9(F(1,\om_9)$ and  $3\nmid  F(-1,1), N(F(-1,\om)), N_9(F(-1,\om_9)$. 
We suppose that $N(F(\pm 1 ,\om))$ and $N_9(F(\pm 1, \om_9)$ do not contain any Type 1 primes. Then, in $\mathbb Z[\om]$
\begin{align*} F(1,\om) & =\pm \om^{t_1}(\om-1)(1+6A_1+6B_1\om), \\
F(1,\om_9)F(1,\om_9^4)F(1,\om_9^7) & =\pm \om^{t_2}(\om-1)(1+6A_2+6B_2\om),\\
F(-1,\om) & =\pm \om^{t_3}(1+6A_3+6B_3\om), \\
F(-1,\om_9)F(-1,\om_9^4)F(-1,\om_9^7)& = \pm \om^{t_4}(1+6A_4+6B_4\om),
\end{align*}
for some $t_i=0,1$ or 2. Multiplying $F(x,y)$ by $\pm y^j$ we assume that
$$ F(1,\om)=(\om -1) (1+6A_1+6B_1\om) $$
and write
$F(x,y)=f(y)+(1-x)g(y)$ with
$$f(y) = (y-1)(1 + 6A_1+6B_1y) + (y^2+y+1)h(y),\quad h(y)=h_0+h_1(y-1)+\cdots +h_6(y-1)^6. $$
Since $3\parallel F(1,1)=3h_0$ we have $h_0\equiv \pm 1\bmod 3$.
Taking 
$$F_1(y)=y-1+h_0(y^2+y+1)+ (y^3-1)(h_1+h_2(y-1)), $$ 
and checking  on Mathematica (note $3\mid h_1^3-h_1$), we see that
$$ F_1(y)F_1(y^4)F_1(y^7)\equiv (y^3-1) (1+3h_0(h_0-1))-3h_0^3y^3   +9s_1(y^3)   \bmod y^6+y^3+1.$$
Observing that divisibility by $(\om_9-1)^7$ means divisibility by $3(\om_9-1)$ and hence in $\mathbb Z[\om]$ by $3(\om -1)$, and $y^2+y+1=(y-1)^2+3y$,  we get
$$\pm \om^{t_2}(\om -1) \equiv (\om -1)\pm 3 \;\; \bmod 3(\om -1), $$
and deduce that $\pm \om^{t_2}=\om$ or $\om^2$.

Writing
$$ g(y)=b_0+b_1(y-1)+\cdots + b_8(y-1)^8, $$
we have $F(-1,1)=3h_0+ 2b_0$ and $b_0\equiv \ve \mod 3$ with $\ve =1$ or $-1$.  We get
$$ \pm \om^{t_3} \equiv (\om-1) + 2\ve +2b_1(\om-1) \bmod 6. $$
Checking the possibilities we must have $\pm \om^{t_3}=-\ve \om^{2}$  and
$b_1\equiv 0\bmod 3$ when $b_0\equiv 1\bmod 3$ and $b_1\equiv -1\bmod 3$ when $b_0\equiv -1\bmod 3$. So, we have
$$F(-1,y)=F(1,y)+ 2\lambda(y) \bmod 6,  \quad F(1,y)=(y-1)+(y^2+y+1)h(y)\bmod 6 $$
with the two possibilities
$$ \lambda (y) =1 +b_2(y-1)^2+\cdots +b_8(y-1)^8  \text{ or } -y+b_2(y-1)^2+\cdots +b_8(y-1)^8. $$
We write 
$$ \pm \om^{t_4}\equiv  A+2B+4C+8E \quad \bmod 6, $$
where
\begin{align*}
A & = F(1,\om_9)F(1,\om_9^4)F(1,\om_9^7) \equiv \om (\om-1) \text{ or } \om^2(\om-1) \bmod 6, \\
B & =F(1,\om_9)F(1,\om_9^7)\lambda(\om_9^4)+ F(1,\om_9)F(1,\om_9^4)\lambda(\om_9^7)+F(1,\om_9^4)F(1,\om_9^7)\lambda(\om_9),\\
C & =F(1,\om_9)\lambda(\om_9^7)\lambda(\om_9^4)+ F(1,\om_9^7)\lambda(\om_9^4)\lambda(\om_9)+F(1,\om_9^4)\lambda(\om_9^7)\lambda(\om_9),\\
E & =\lambda(\om_9)\lambda (\om^4) \lambda (\om^7).
\end{align*}

Observing that working mod 6, we can ignore the $b_4(y-1)^4$ and higher terms (since
$(1-\om_9)^4$ gives a multiple of $(1-\om)(1-\om_9)$ and so in $\mathbb Z[\om]$ 
gives at least a 3), one can check that
$$ \lambda_1(y)=1+b_2(y-1)^2+b_3(y-1)^3 \text{ or } -y+b_2(y-1)^2+b_3(y-1)^3$$
has 
$$\lambda_1(y)\lambda_1(y^4)\lambda_1(y^7)\equiv 1 \text{ or } -y^3 \quad \bmod y^6+y^3+1 $$
and
$$ 8E\equiv 2 \text{ or } -2\om \quad \bmod 6. $$
For $C \bmod 3,$ we can replace $F(1,y)$  and $\lambda(y)$ by 
$$ F_2(y)=(y-1)+h_0(y^2+y+1)+h_1(y^3-1),\;\; \lambda_2(y)=1 +b_2(y-1)^2 \text{ or } -y + b_2(y-1)^2, $$
and check that
$$F_2(y)\lambda_2(y^7)\lambda_2(y^4)+ F_2(y^7)\lambda_2(y^4)\lambda_2(y)+F_2(y^4)\lambda_2(y^7)\lambda_2(y)\equiv 3s_2(y) \bmod y^6+y^3+1,$$
giving $4C\equiv 0 \bmod 6.$

For $B$ mod 3, we can replace $F(1,y)$ and $\lambda(y)$ by
$$ F_3(y)=(y-1)+h_0(y^2+y+1), \quad \lambda_3(y)=1 \text{ or } -y,$$ 
and verify that
$$F_3(y)F_3(y^7)\lambda_3(y^4)+ F_3(y)F_3(y^4)\lambda_3(y^7)+F_3(y^4)F_3(y^7)\lambda_3(y)\equiv 3s_3(y) \bmod y^6+y^3+1,  $$
and $2B\equiv 0\bmod 6$.

Hence we get either
$$ \pm \om^{t_4}\equiv \om^{ t_2}(\om-1)+ 2 \bmod 6.$$
or 
$$ \pm \om^{t_4}\equiv  \om^{ t_2}(\om-1) -2\om \bmod 6,$$
with $t_2=1$ or 2. In the first case, from $\om-1 \mid 1\pm \om^{t_4},$ we have $\pm \om^{t_4}=-\om^{t_4}$, with mod 3 giving $t_4=2$, and a contradiction mod 2,  and in the second $1-\om \mid -\om\pm \om^{t_4}$ gives $\pm \om^{t_4}=\om^{t_4}$, with mod 3 giving $t_4=2$ and a contradiction mod 2.
\vskip0.2in
\nni
{\bf $D$ is even}

Notice that $2$ remains prime in the UFDs $\mathbb Z[\om]$ and $\mathbb Z[\om_9]$ (see, for example, Washington \cite{washington}, Theorems 2.13 and 11.1). Hence, if $2\mid N(F(\ve,\om)),$ then we must have $2^k\parallel F(\ve,\om^j)$, $j=1,2$, some $k\geq 1$  and, since $N(F(\ve,\om)) \equiv N(F(-\ve,\om))\bmod 2,$ also  $2^{\ell} \mid F(-\ve,\om^j))$, $j=1,2$, some $\ell\geq 1$, and these four terms contribute $2^{2t}$, some $t\geq 2$. 
Likewise, $2$ stays prime in  $\mathbb Z[\om_9],$ and  if $2\mid N_9(F(\ve,\om_9)),$
the corresponding 12 terms $F(\pm 1,\om_9^j)$, $\gcd(j,3)=1$, contribute $2^{6t},t\geq 2$.
If, in addition, the $F(\pm 1,1)$ are even then we get at least two more twos.
Hence, in all such cases, we either have $2^4\parallel D$ or $2^6\mid D$. We obtained all these.

That leaves the case $N(F(\pm1,\om)),N_9(F(\pm 1,\om_9)$ odd and $F(\pm 1,1)$
even. We must have $3\parallel F(1,1), N(F(1,\om)), N_9(F(1,\om_9))$, $3\nmid F(-1,1),$
and hence $G(x,y)=F(x,y)-(y^9-1)/(y-1)$ will just change the values $F(\pm 1,1)$ by 9 and produce an odd $D$ with $3^3\parallel D$.  This, we showed,  must have a Type 1 prime $p$
coming from $N(F(\pm 1,\om)$ or $N_9(F(\pm 1,\om_9)$ (these have not changed), and so our original $D$ contained this $p$. We obtained all the $2^2\cdot 3^3 mp$, $\gcd(m,3)=1$.
\end{proof}

\end{document}